\newcommand{\qhypergeom}[5]{\mbox{$
_#1 \phi_#2\left. \left(\!\!\!\!
\begin{array}{c}
\multicolumn{1}{c}{\begin{array}{c} #3
\end{array}}\\[1mm]
\multicolumn{1}{c}{\begin{array}{c} #4
           \end{array}}\end{array}
\right| \displaystyle{#5}\right) $} }
 \newtheorem{theorem}{Theorem}[section]
 \newtheorem{proposition}[theorem]{Proposition}
 \theoremstyle{definition}
 \theoremstyle{remark}
 \numberwithin{equation}{section}
\begin{document}

%
%
%
%
%
%
%
%
%

\title[Characterization of certain sequences  of $q$-polynomials]
 {Characterization of certain sequences \\ of $q$-polynomials}

\author[P. Njionou Sadiang]{P. Njionou Sadjang}

\address{%
University of Douala,\\
Faculty of Industrial Engineering\\
Douala\\
Cameroon}

\email{pnjionou@yahoo.fr}

\thanks{This work was completed with the support of the University of Kassel
}

\subjclass{33D45; 33C65;  05A40}

\keywords{Quasi-orthogonality, $q$-Appell sets, Al-Salam Carlitz polynomials.}

\date{\today}

\begin{abstract}
A characterization is given of those sequences of quasi-orthogonal polynomials which form also $q$-Appell sets. 
\end{abstract}

\maketitle

\section{Introduction}

\noindent A sequence of polynomials $\{Q_n\}$, $n=0,\; 1,\; 2,\; \ldots $, $\deg Q_n(x)=n$ is said to be quasi-orthogonal if there is an interval $(a,b)$ and a non-decreasing function $\alpha(x)$ such that 
\[ \int_a^b x^mQ_n(x)d\alpha(x)\left\{\begin{array}{ll}
=0 & \textrm{for}\quad 0\leq m\leq n-2\\
\neq 0 &\textrm{for}\quad 0\leq m=n-1\\
\neq 0 &\textrm{for}\quad 0=m=n. 
\end{array}\right.\]
We say that two polynomial sets are related if one set is quasi-orthogonal with respect to the interval and the distribution of the orthogonality of the other set.
Riesz \cite{riesz} and Chihara \cite{chihara} have shown that a necessary and sufficient condition for the quasi-orthogonality of the $\{Q_n(x)\}$ is that there exist nonzero constants, $\{a_n\}_{n=0}^{\infty}$ and $\{b_n\}_{n=1}^{\infty}$, such that 
\begin{equation}
   \begin{array}{lll}
      Q_n(x)&=&a_n P_n(x)+b_n P_{n-1}(x),\\
      Q_0(x)&=& a_0P_0(x)
   \end{array}\quad n\geq 1,
\end{equation}
where the $\{P_n(x)\}_{n=0}^{\infty}$ are the related orthogonal polynomials. 

\noindent  A polynomial set $\{P_n(x)\}$ forms a $q$-Appell set if  the condition \cite{sharma,al-salam} given by
\begin{equation}
  D_qP_n(x)=[n]_q P_{n-1}(x),\quad n=0,\, 1,\; 2,\; 3,\;\ldots
\end{equation}
is fulfilled. 
Here, $D_q$ is the Hahn derivative defined by 
\[D_qf(x)=\dfrac{f(x)-f(qx)}{(1-q)x},\quad x\neq 0\]
and $[n]_q$ stands for the $q$-number defined by $[n]_q=\dfrac{1-q^n}{1-q}$.
Such sets were first introduced by Sharma and Chak \cite{sharma} to generalize the Appell set \cite{appell}. Several papers related to $q$-Appell sets have been recently published, see for instance \cite{ernst2015-1} and the references therein. 

In 1967, Al-Salam has given in \cite{al-salam} a characterization of those sequences of orthogonal polynomials $\{P_n(x)\}$ which are also $q$-Appell sets. More precisely, he gave a characterization of those sequences of orthogonal polynomials for which $D_qP_n(x)=[n]_qP_{n-1}(x)$ for $n=0,\, 1,\; 2,\; 3,\;\ldots$. 

The purpose of this paper is to study  those classes of  polynomial sets $\{P_n(x)\}$ that are at the same time quasi-orthogonal sets and $q$-Appell sets.

\section{Preliminaries results and definitions}

\noindent Let us introduce the so-called $q$-Pochhammer symbol
\[(x;q)_n=\left\{
\begin{array}{ll}
(1-x)(1-xq)\cdots(1-xq^{n-1}) & n=1,2,\dots\\
1& n=0
\end{array}
\right.\] 

\noindent and
 the basic hypergeometric or $q$-hypergeometric function $_r\phi_s$ is defined by the series
 \[
 \qhypergeom{r}{s}{a_1,\ldots,a_r}{b_1,\ldots,b_s}{q;z}:=\sum_{k=0}^\infty\frac{(a_1,\ldots,a_r;q)_k}{(b_1,\cdots,b_s;q)_k}\left((-1)^k
 q^{\binom{k}{2}}\right)^{1+s-r}\frac{z^k}{(q;q)_k},
 \]
  where
 \[(a_1,\ldots,a_r)_k:=(a_1;q)_k\ldots(a_r;q)_k. \]
\noindent The Al Salam-Carlitz I polynomials \cite[P.\ 534]{KLS} have the $q$-hypergeometric representation 
\begin{eqnarray*}
U_n^{(a)}(x;q)&=&(-a)^nq^{n\choose2}\qhypergeom{2}{1}{q^{-n},x^{-1}}{0}{q;\frac{qx}{a}}
\end{eqnarray*}
and fulfill the three-term recurrence relation
\begin{equation*}
xU_n^{(a)}(x;q)=U_{n+1}^{(a)}(x;q)+(a+1)q^nU_n^{(a)}(x;q)-aq^{n-1}(1-q^n)U_{n-1}^{(a)}(x;q) 
\end{equation*}
and the $q$-derivative rule 
\begin{equation*}
D_q U_n^{(a)}(x;q)=[n]_qU_{n-1}^{(a)}(x;q).
\end{equation*}
It is therefore clear that the Al-Salam Carlitz I polynomials form a $q$-Appell set. 

\begin{proposition}(See \cite[Theorem 1]{dickinson})\label{dickinson}
For $\{Q_n(x)\}$ to be a set of polynomials quasi-orthogonal with respect to an interval $(a,b)$ and a distribution $d\alpha(x)$, it is necessary and sufficient that there exist a set of nonzero constants $\{T_k\}_{k=0}^{\infty}$ and a set of polynomials $\{P_n(x)\}$ orthogonal with respect to $(a,b)$ and $d\alpha(x)$ such that 
\begin{equation}\label{connect0}
  P_n(x)=\sum_{k=0}^{n}T_kQ_k(x),\quad n\geq 0.
\end{equation}
\end{proposition}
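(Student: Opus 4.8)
The plan is to reduce both implications to the Riesz--Chihara criterion already recorded in the Introduction, namely that $\{Q_n\}$ is quasi-orthogonal with respect to $(a,b)$ and $d\alpha$ precisely when it admits a two-term expansion $Q_n = a_n P_n + b_n P_{n-1}$ for $n\ge 1$, together with $Q_0 = a_0 P_0$, in the associated orthogonal family $\{P_n\}$, with all the $a_n,b_n$ nonzero. The representation (\ref{connect0}) is nothing but the telescoped inverse of such an expansion, and the whole argument amounts to recognizing this and then fixing the normalization of the orthogonal polynomials appropriately.

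For sufficiency I would start from (\ref{connect0}) and take successive differences. Since $P_n = \sum_{k=0}^{n} T_k Q_k$ with the $T_k$ independent of $n$, subtracting the same relation for $n-1$ gives $P_n - P_{n-1} = T_n Q_n$, that is $Q_n = \frac{1}{T_n}P_n - \frac{1}{T_n}P_{n-1}$ for $n\ge 1$, while $Q_0 = \frac{1}{T_0}P_0$. This is exactly a Riesz--Chihara expansion with $a_n = 1/T_n$ and $b_n = -1/T_n$, both nonzero because $T_n \neq 0$; hence $\{Q_n\}$ is quasi-orthogonal with respect to $(a,b)$ and $d\alpha$. (Exactness of degree, $\deg Q_n = n$, drops out since $\deg(P_n-P_{n-1})=n$ and $T_n\neq 0$.)

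For necessity I would run this computation backwards. By the Riesz--Chihara criterion there are nonzero $a_n,b_n$ with $Q_n = a_n P_n + b_n P_{n-1}$ for the related orthogonal polynomials $\{P_n\}$. To produce the representation (\ref{connect0}) I must exhibit an orthogonal family $\{\tilde P_n\}$ in which $Q_n$ is proportional to the difference $\tilde P_n - \tilde P_{n-1}$, i.e. a Riesz--Chihara expansion with $a_n=-b_n$. Since the orthogonal polynomials for a fixed $(a,b),d\alpha$ are determined only up to nonzero scalar factors, I would rescale $\tilde P_n := \lambda_n P_n$, choosing $\lambda_0 = 1$ and $\lambda_n = -(a_n/b_n)\lambda_{n-1}$ recursively; each $\lambda_n$ is nonzero, so $\{\tilde P_n\}$ is again orthogonal with respect to the same interval and distribution. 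A short check then gives $\tilde P_n - \tilde P_{n-1} = T_n Q_n$ with $T_n := \lambda_n/a_n \neq 0$, and $\tilde P_0 = T_0 Q_0$ with $T_0 := 1/a_0$. Telescoping these differences yields $\tilde P_n = \sum_{k=0}^{n} T_k Q_k$ with $n$-independent nonzero constants $T_k$, which is precisely (\ref{connect0}).

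The only genuinely delicate point is this normalization step in the necessity direction: the naive inversion of $Q_n = a_n P_n + b_n P_{n-1}$ produces connection coefficients $c_k^{(n)}$ that depend on $n$, whereas (\ref{connect0}) demands that the $T_k$ be the same for every $n$. The key observation that removes the obstacle is that fixing the otherwise free scalar normalization $\lambda_n$ of the orthogonal family is exactly enough to convert the general two-term relation into the telescoping form $a_n = -b_n$, after which the $n$-independence of the coefficients is automatic. I would present sufficiency first and necessity second, since the forward computation makes transparent the rescaling needed in the converse.
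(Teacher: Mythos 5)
The paper does not actually prove this proposition: it is imported verbatim as Theorem~1 of Dickinson's paper \cite{dickinson}, so there is no internal proof to compare against. Your argument is correct and is a clean, self-contained reconstruction: you reduce both directions to the Riesz--Chihara two-term criterion $Q_n=a_nP_n+b_nP_{n-1}$ already quoted in the Introduction. The sufficiency direction (differencing \eqref{connect0} to get $P_n-P_{n-1}=T_nQ_n$, hence $a_n=1/T_n$, $b_n=-1/T_n$) is immediate, and you correctly isolate the one genuinely delicate point in the necessity direction, namely that the naive inversion of the two-term relation gives $n$-dependent connection coefficients, and that the free scalar normalization $\lambda_n=-(a_n/b_n)\lambda_{n-1}$ of the orthogonal family is exactly what converts the relation to the telescoping form with $n$-independent nonzero $T_k=\lambda_n/a_n$. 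Two small points worth making explicit if you write this up: in the sufficiency direction you should note that a family orthogonal with respect to $(a,b)$ and $d\alpha$ is unique up to nonzero scalar factors, so the $\{P_n\}$ appearing in \eqref{connect0} really is ``the related'' orthogonal family in the sense of the Riesz--Chihara statement; and the nonvanishing of $\int_a^b x^{n-1}P_{n-1}(x)\,d\alpha(x)$ (needed to get the strict inequality $\int_a^b x^{n-1}Q_n(x)\,d\alpha(x)\neq 0$) deserves one line. Since the paper leans on this proposition only as a black box, your derivation adds something the paper does not contain.
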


\begin{proposition}(See \cite[Theorem 2]{dickinson})\label{dick2}
A necessary and sufficient condition that the set $\{Q_n(x)\}_{n=0}^{\infty}$ where each $Q_n(x)$ is a polynomial of degree precisely $n$, be quasi-orthogonal is that it satisfies 
\begin{equation*}
Q_{n+1}(x)=(x+b_n)Q_n(x)-c_nQ_{n-1}(x)+d_n\sum_{k=0}^{n-2}T_kQ_k(x),
\end{equation*}
for all $n$, with $d_0=d_1=0$. 
\end{proposition}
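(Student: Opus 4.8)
The engine for both implications is Proposition \ref{dickinson}: a degree-exact sequence $\{Q_n\}$ is quasi-orthogonal precisely when there are nonzero constants $\{T_k\}$ and an orthogonal sequence $\{P_n\}$ with $P_n=\sum_{k=0}^nT_kQ_k$. Two facts make this usable. First, telescoping the connection gives $T_nQ_n=P_n-P_{n-1}$, i.e.\ $Q_n=(P_n-P_{n-1})/T_n$. Second, and this is the observation I would build the whole argument around, the tail occurring in the stated recurrence is itself an orthogonal polynomial: $\sum_{k=0}^{n-2}T_kQ_k=P_{n-2}$, so the recurrence to be characterized reads $Q_{n+1}=(x+b_n)Q_n-c_nQ_{n-1}+d_nP_{n-2}$. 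I will also use that any orthogonal $\{P_n\}$ obeys a three-term recurrence $xP_n=\lambda_nP_{n+1}+\mu_nP_n+\nu_nP_{n-1}$ with $\nu_n\neq0$, and conversely (Favard) that a sequence obeying such a recurrence with nonvanishing off-diagonal coefficient is orthogonal with respect to some distribution.

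For necessity, assume $\{Q_n\}$ quasi-orthogonal and pull back $\{T_k\}$ and $\{P_n\}$ from Proposition \ref{dickinson}. Normalizing $Q_n$ to be monic, $P_n$ has leading coefficient $T_n$, so its monic form $\pi_n=P_n/T_n$ satisfies $x\pi_n=\pi_{n+1}+\alpha_n\pi_n+\beta_n\pi_{n-1}$, and $Q_n=\pi_n-r_n\pi_{n-1}$ with $r_n=T_{n-1}/T_n$. I would then expand $xQ_n=x\pi_n-r_nx\pi_{n-1}$ via the three-term recurrence, subtract it from $Q_{n+1}=\pi_{n+1}-r_{n+1}\pi_n$, and re-express the difference in the $Q$-basis using $\pi_m=\frac1{T_m}\sum_{k=0}^mT_kQ_k$. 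Reading off the coefficients of $\pi_n,\pi_{n-1},\pi_{n-2}$ yields exactly the stated recurrence, with $b_n=r_n-r_{n+1}-\alpha_n$, $\;c_n=\beta_n-r_n\alpha_{n-1}-b_nr_n$ and $T_{n-2}d_n=r_n\beta_{n-1}-c_nr_{n-1}$; moreover $d_0=d_1=0$ since the tail $P_{n-2}$ is empty for $n\le1$. This direction is essentially bookkeeping.

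For sufficiency I would run the computation in reverse: given the recurrence, define $P_n=\sum_{k=0}^nT_kQ_k$ and substitute $Q_n=(P_n-P_{n-1})/T_n$ to obtain a relation expressing $P_{n+1}$ through $xP_n,\,P_n,\,xP_{n-1},\,P_{n-1},\,P_{n-2}$. The goal is to show this collapses to a genuine three-term recurrence $P_{n+1}=(A_nx+B_n)P_n-C_nP_{n-1}$ with $C_n\neq0$; Favard's theorem then supplies a distribution $d\alpha$ making $\{P_n\}$ orthogonal, and Proposition \ref{dickinson} delivers the quasi-orthogonality of $\{Q_n\}$.

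The main obstacle is exactly this collapse. The naive substitution does not simplify term-by-term: it leaves a spurious $xP_{n-1}$ contribution and an extra $P_{n-2}$ term, so the three-term form is not visible from a single identity. The real content is that $b_n,c_n,d_n$ are not independent — the three relations displayed above force $\beta_n$ read off from $c_n$ to agree with $\beta_{n-1}$ read off from $d_n$ after a shift of index, and this is what must be exploited. I would establish the collapse by induction on $n$, carrying as hypothesis the three-term recurrence for $\{P_j\}_{j\le n}$ and using the recurrence at indices $n$ and $n-1$ together to eliminate the $xP_{n-1}$ term; the $P_{n-2}$ contributions then cancel precisely because of that compatibility. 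Verifying the nondegeneracy $C_n\neq0$, which is what separates a genuinely quasi-orthogonal family from a merely formal recurrence, is the delicate point of the whole proposition.
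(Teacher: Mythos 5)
The paper offers no proof of this proposition at all --- it is imported verbatim from Dickinson \cite[Theorem 2]{dickinson} --- so there is no in-paper argument to measure yours against; I can only judge the proposal on its own terms. Your necessity half is correct: writing $Q_n=\pi_n-r_n\pi_{n-1}$ with $r_n=T_{n-1}/T_n$, expanding $xQ_n$ via the monic three-term recurrence $x\pi_n=\pi_{n+1}+\alpha_n\pi_n+\beta_n\pi_{n-1}$, and using $\sum_{k=0}^{n-2}T_kQ_k=P_{n-2}=T_{n-2}\pi_{n-2}$ does yield the stated recurrence, and your three coefficient formulas $b_n=r_n-r_{n+1}-\alpha_n$, $c_n=\beta_n-r_n\alpha_{n-1}-b_nr_n$, $T_{n-2}d_n=r_n\beta_{n-1}-c_nr_{n-1}$ check out.

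The sufficiency half has a genuine gap that your sketch names but does not close. After substituting $Q_j=(P_j-P_{j-1})/T_j$ and eliminating $xP_{n-1}$ via the inductively assumed recurrence $xP_{n-1}=\frac{1}{A_{n-1}}\left(P_n-B_{n-1}P_{n-1}+C_{n-1}P_{n-2}\right)$, the residual coefficient of $P_{n-2}$ is $T_{n+1}\left(d_n+\frac{c_n}{T_{n-1}}-\frac{C_{n-1}}{T_nA_{n-1}}\right)$, and nothing in the stated hypotheses forces this to vanish: the ``compatibility'' you appeal to was \emph{derived from} orthogonality in the forward direction and is simply not available as a hypothesis in the converse. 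Indeed, as literally stated the converse is false: $Q_n=x^n$ satisfies the displayed recurrence with $b_n=c_n=d_n=0$ and any nonzero $T_k$, yet $\{x^n\}$ is not quasi-orthogonal (quasi-orthogonality would require $\mu_3\neq0$ from $n=2$, $m=1$ and $\mu_3=0$ from $n=3$, $m=0$). Dickinson's original Theorem 2 carries nondegeneracy and compatibility conditions on the constants that the present statement suppresses; your induction can only close once those are restored, at which point both the cancellation of the $P_{n-2}$ terms and the nonvanishing $C_n\neq0$ become identities to verify rather than cancellations to hope for.
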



\begin{proposition}(See \cite[Theorem 4.1]{al-salam})\label{pro1}
If $\{Q_n(x)\}_{n=0}^{\infty}$ is a $q$-Appell set which are also orthogonal, then there exists a non zero constant $b$ such that 
\[Q_n(x)=b^nU_{n}^{(a/b)}\left(\dfrac{x}{b};q\right)\]
for all $n\geq 0$.  
\end{proposition}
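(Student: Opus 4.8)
The plan is to play the three-term recurrence coming from orthogonality against the $q$-Appell relation $D_qQ_n(x)=[n]_qQ_{n-1}(x)$, and to show that together they force the recurrence coefficients into the two-parameter family realized by the Al-Salam Carlitz I polynomials. First I would record a decisive elementary consequence of the $q$-Appell property: since $D_q(x^m)=[m]_qx^{m-1}$, comparing leading coefficients on both sides of $D_qQ_n=[n]_qQ_{n-1}$ gives $k_n=k_{n-1}$, so all the $Q_n$ share one leading coefficient equal to $Q_0$. After the usual normalization we may therefore take the family monic with $Q_0=1$. Orthogonality with respect to a non-decreasing $\alpha$ then supplies, via Favard's theorem, constants $\beta_n$ and $\gamma_n\neq0$ with
\[ Q_{n+1}(x)=(x-\beta_n)Q_n(x)-\gamma_nQ_{n-1}(x),\qquad Q_{-1}=0,\ Q_0=1. \]

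Next I would apply $D_q$ to this recurrence, using the $q$-Leibniz rule $D_q(fg)(x)=g(x)D_qf(x)+f(qx)D_qg(x)$ together with $D_qx=1$. Here $D_q[(x-\beta_n)Q_n(x)]=Q_n(x)+(qx-\beta_n)[n]_qQ_{n-1}(x)$; replacing every $D_qQ_m$ by $[m]_qQ_{m-1}$ and using the identity $[n+1]_q-1=q[n]_q$ collapses the result, after dividing by $[n]_q$, to
\[ qQ_n(x)=(qx-\beta_n)Q_{n-1}(x)-\gamma_n\frac{[n-1]_q}{[n]_q}Q_{n-2}(x). \]
Comparing this against $q$ times the recurrence shifted by one index, and using that $Q_{n-1}$ and $Q_{n-2}$ have distinct degrees and are therefore linearly independent, I would read off the two scalar relations $\beta_n=q\beta_{n-1}$ and $\gamma_n=q\frac{[n]_q}{[n-1]_q}\gamma_{n-1}$, valid for all $n$ once the low-index cases are checked by hand.

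Solving these elementary recurrences yields $\beta_n=q^n\beta_0$ and $\gamma_n=\gamma_1q^{n-1}[n]_q=\frac{\gamma_1}{1-q}q^{n-1}(1-q^n)$. It then remains to identify this data with a rescaled Al-Salam Carlitz family. Setting $\widetilde Q_n(x)=b^nU_n^{(a/b)}(x/b;q)$ and pushing the stated recurrence for $U_n^{(a)}$ through the substitution $x\mapsto x/b$, one checks that $\widetilde Q_n$ is monic and obeys $\widetilde Q_{n+1}=(x-(a+b)q^n)\widetilde Q_n+ab\,q^{n-1}(1-q^n)\widetilde Q_{n-1}$. Hence I need only choose $a,b$ with $a+b=\beta_0$ and $ab=-\gamma_1/(1-q)$, i.e. as the two roots of $t^2-\beta_0t-\gamma_1/(1-q)=0$; since $\gamma_1\neq0$ both roots are nonzero, so a nonzero $b$ exists. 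With this choice $Q_n$ and $\widetilde Q_n$ satisfy the same recurrence with the same initial data $Q_0=\widetilde Q_0=1$, $Q_1=\widetilde Q_1=x-\beta_0$, and an induction forces $Q_n=\widetilde Q_n=b^nU_n^{(a/b)}(x/b;q)$ for all $n$.

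The main obstacle I anticipate is the bookkeeping in the $q$-differentiation step: the non-symmetric $q$-Leibniz rule must be applied with care, since it is precisely the factor $f(qx)=qx-\beta_n$ (rather than $x-\beta_n$) that generates the powers of $q$ in the coefficients, and the relations $\beta_n=q\beta_{n-1}$, $\gamma_n=q\frac{[n]_q}{[n-1]_q}\gamma_{n-1}$ must be verified separately for $n=0,1$, where $Q_{-1}$ and the vanishing factor $[0]_q$ intervene. Everything after the extraction of $\beta_n=q^n\beta_0$ and $\gamma_n=\frac{\gamma_1}{1-q}q^{n-1}(1-q^n)$ is the routine quadratic identification with the Al-Salam Carlitz parameters.
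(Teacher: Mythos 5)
Your proof is correct, but note that the paper does not actually prove this proposition: it is quoted verbatim from Al-Salam's 1967 paper (Theorem 4.1 there) and used as a black box, so there is no internal proof to compare against. What you have produced is a sound, self-contained derivation, and its engine --- apply $D_q$ to the three-term recurrence via the rule $D_q(fg)(x)=g(x)D_qf(x)+f(qx)D_qg(x)$, replace $D_qQ_m$ by $[m]_qQ_{m-1}$, divide by $[n]_q$, and compare with $q$ times the index-shifted recurrence to force $\beta_n=q\beta_{n-1}$ and $\gamma_n=q\frac{[n]_q}{[n-1]_q}\gamma_{n-1}$ --- is precisely the technique the paper itself uses in the proof of its Theorem 3.1 (where the same coefficient recurrences $b_n=qb_{n-1}$, $c_n=q\frac{[n]_q}{[n-1]_q}c_{n-1}$ appear, together with an extra sum term handled separately). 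Your computations check out: the leading-coefficient argument for monicity, the identity $[n+1]_q-1=q[n]_q$, the rescaled recurrence $\widetilde Q_{n+1}=(x-(a+b)q^n)\widetilde Q_n+ab\,q^{n-1}(1-q^n)\widetilde Q_{n-1}$ for $\widetilde Q_n(x)=b^nU_n^{(a/b)}(x/b;q)$, and the identification $a+b=\beta_0$, $ab=-\gamma_1/(1-q)$ are all correct, and $\gamma_1\neq 0$ does guarantee both roots of the quadratic are nonzero. The only point worth flagging is that you should say a word about why the roots $a,b$ can be taken real (for a positive-definite measure and $0<q<1$ one has $\gamma_1>0$, so the discriminant $\beta_0^2+4\gamma_1/(1-q)$ is positive), or else accept complex parameters as Al-Salam does; this is cosmetic rather than a gap.
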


\section{A characterization result}

\noindent In this section, we characterize quasi-orthogonal polynomial sets that are also $q$-Appell sets. 

\begin{theorem}\label{theo-car1}
If $\{Q_n(x)\}_{n=0}^{\infty}$ is a $q$-Appell set of quasi-orthogonal polynomials,
 then there exist three reel numbers $b$, $c$ and $\lambda$, such that 
\begin{equation}\label{rec-car1}
 Q_{n+1}(x)=(x+bq^n)Q_n(x)-cq^{n-1}[n]_qQ_{n-1}(x)+d_n\sum_{k=0}^{n-2}\dfrac{\lambda^k}{[k]_q!}Q_k(x). 
\end{equation}
\end{theorem}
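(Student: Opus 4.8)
The plan is to start from the recurrence characterization of quasi-orthogonality in Proposition~\ref{dick2} and then pin down its coefficients by exploiting the $q$-Appell condition. First I would invoke Proposition~\ref{dick2} to write, for nonzero constants $T_k$ and suitable coefficients $b_n,c_n,d_n$ (with $d_0=d_1=0$),
\begin{equation*}
Q_{n+1}(x)=(x+b_n)Q_n(x)-c_nQ_{n-1}(x)+d_n\sum_{k=0}^{n-2}T_kQ_k(x).
\end{equation*}
The theorem then reduces to identifying $b_n=bq^n$, $c_n=cq^{n-1}[n]_q$ and $T_k=\lambda^k/[k]_q!$ for some real $b,c,\lambda$.

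Next I would apply the Hahn operator $D_q$ to both sides and use the $q$-Appell relation $D_qQ_m=[m]_qQ_{m-1}$. Via the product rule $D_q\big((x+b_n)Q_n\big)=Q_n(x)+(qx+b_n)[n]_qQ_{n-1}(x)$ and the identity $[n+1]_q-1=q[n]_q$, the left-hand side $[n+1]_qQ_n$ absorbs the stray $Q_n$ on the right and leaves an expression for $q[n]_qQ_n(x)$ in terms of the lower polynomials together with the monomial multiple $qx\,Q_{n-1}$. I would then obtain a second expression for $q[n]_qQ_n(x)$ by multiplying the recurrence at index $n-1$ by $q[n]_q$. Equating the two and cancelling the common $q[n]_q\,x\,Q_{n-1}$ term yields a vanishing linear combination of the independent polynomials $Q_{n-1},Q_{n-2},\dots,Q_0$.

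Matching coefficients (legitimate since $\deg Q_k=k$) produces the three driving recursions. The $Q_{n-1}$ coefficient gives $b_n=qb_{n-1}$, hence $b_n=q^nb_0$ and we set $b:=b_0$; the $Q_{n-2}$ coefficient gives $c_n[n-1]_q=q[n]_qc_{n-1}$, hence $c_n=q^{n-1}[n]_qc_1$ and we set $c:=c_1$. Each $Q_j$ in the tail (for $0\le j\le n-3$) gives $d_nT_{j+1}[j+1]_q=q[n]_qd_{n-1}T_j$; dividing the general instance by the $j=0$ instance eliminates $d_n$ and $d_{n-1}$ and shows that $T_{j+1}[j+1]_q/T_j$ equals the constant $\lambda:=T_1/T_0$, whence $T_j=\lambda^jT_0/[j]_q!$. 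Absorbing $T_0$ into the still-free coefficient $d_n$ normalizes $T_j=\lambda^j/[j]_q!$, and reality of $b,c,\lambda$ follows from reality of the recurrence coefficients, so substituting back reproduces \eqref{rec-car1}.

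The main obstacle I anticipate is not a single calculation but the careful bookkeeping of the $D_q$ step, namely getting the product rule and the index shift on $(x+b_n)Q_n$ exactly right so the $xQ_{n-1}$ terms genuinely cancel, together with the degenerate cases. In particular the tail may be absent: if $d_2=0$ then $d_nT_1=q[n]_qd_{n-1}T_0$ forces $d_n=0$ for all $n$, the set is genuinely orthogonal, and Proposition~\ref{pro1} already yields $Q_n(x)=b^nU_n^{(a/b)}(x/b;q)$, whose three-term recurrence is precisely \eqref{rec-car1} with vanishing sum; otherwise $d_2\neq 0$ propagates $d_n\neq 0$ for all $n\ge 2$, guaranteeing that $\lambda$ is well defined and that the coefficient comparison in the tail is non-vacuous.
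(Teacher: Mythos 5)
Your proposal is correct and follows essentially the same route as the paper: $q$-differentiate the Dickinson recurrence, shift the index, match coefficients of the $Q_k$ to get the recursions $b_n=qb_{n-1}$, $c_n=q[n]_q c_{n-1}/[n-1]_q$ and $d_n[k+1]_qT_{k+1}=q[n]_qd_{n-1}T_k$, and solve to obtain $b_n=q^nb_0$, $c_n=q^{n-1}[n]_qc_1$, $T_k=\lambda^k/[k]_q!$. The only (harmless) divergences are your explicit normalization of $T_0$ and your treatment of the degenerate case $d_2=0$, which you resolve by noting the conclusion holds vacuously, whereas the paper excludes it outright because an all-zero tail would make $\{Q_n\}$ genuinely orthogonal, contradicting strict quasi-orthogonality.
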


\begin{proof}
Assume that $\{Q_n(x)\}_{n=0}^{\infty}$ is a $q$-Appell set which is quasi-orthogonal and $\{P_n(x)\}_{n=0}^{\infty}$ the related orthogonal family. From Proposition \ref{dick2}, there exist three sequences $\{a_n\}_{n=0}^{\infty}$, $\{b_n\}_{n=0}^{\infty}$ and $\{d_n\}_{n=0}^{\infty}$ with $d_0=d_1=0$ such that 
\begin{equation}\label{rr2}
Q_{n+1}(x)=(x+b_n)Q_n(x)-c_nQ_{n-1}(x)+d_n\sum_{k=0}^{n-2}T_kQ_k(x).
\end{equation}
If we $q$-differentiate \eqref{rr2} and use the fact that $\{Q_n(x)\}_{n=0}^{\infty}$ is a $q$-Appell, we get after some simplifications 
\begin{equation}\label{rrr1}
Q_n(x)=\left(x+\dfrac{b_n}{q}\right)Q_{n-1}(x)-\dfrac{c_n}{q}\dfrac{[n-1]_q}{[n]_q}Q_{n-2}(x)+\dfrac{d_n}{q[n]_q}\sum_{k=0}^{n-3}[k+1]_qT_{k+1}Q_k(x). 
\end{equation}
Next, if we replace $n$ by $n-1$ in \eqref{rr2}, we obtain 
\begin{equation}\label{rrr2}
Q_{n}(x)=(x+b_{n-1})Q_{n-1}(x)-c_{n-1}Q_{n-2}(x)+d_{n-1}\sum_{k=0}^{n-3}T_kQ_k(x).
\end{equation}
If we compare \eqref{rrr1} and \eqref{rrr2}, we see that we should have 
\begin{equation}\label{coef1}
b_{n}=qb_{n-1},\quad c_{n}=q\dfrac{[n]_q}{[n-1]_q}c_{n-1},
\end{equation}
and 
\begin{equation}\label{coef2}
 d_n [k+1]_qT_{k+1}=q[n]_qd_{n-1}T_k,\quad\quad k=0,\; 1,\;\cdots n-3.
\end{equation}
Equations \eqref{coef1} yield
\[b_n=q^n b_0,\quad\textrm{and}\quad c_n=q^{n-1}[n]_qc_1.\]
Next, \eqref{coef2} gives for $k=0$ and $k=n-3$ the relations
\begin{equation}\label{coef3}
d_n=\dfrac{q[n]_q}{T_1}d_{n-1}\quad \textrm{and}\quad d_n=\dfrac{q[n]_qT_{n-1}}{[n-2]_qT_{n-2}}d_{n-1}.
\end{equation}
If, for a given $k\geq 2$, $d_k=0$, it follows from \eqref{coef3} that $d_k=0$ for all $k$. In this case \eqref{rr2} becomes a three-term recurrence relation 
\begin{equation}\label{ttr2}
Q_{n+1}(x)=(x+b_n)Q_n(x)-c_nQ_{n-1}(x).
\end{equation} 
In this case, from Proposition \ref{pro1}, it is seen that $\{Q_n(x)\}_{n=0}^{\infty}$ is essentially the sequence of Al-Salam Carlitz polynomials. Thus, in this case, $\{Q_n(x)\}_{n=0}^{\infty}$ is not a sequence of quasi-orthogonal polynomials. Thus, we must have $d_k\neq 0$ for $k\geq 2$. 

Again, using \eqref{coef3}, we have for all $n\geq 0$ the identity $\dfrac{T_{n-1}}{[n]_qT_n}=\dfrac{1}{T_1}$. This last relation gives 
$T_n=\dfrac{T_1^n}{[n]_q!}$. Seting $b_0=b$, $c_1=c$ and $T_1=\lambda$, this ends the proof of the theorem. 
\end{proof}

\begin{theorem}\label{theo-car2}
Let $\{Q_n(x)\}_{n=0}^{\infty}$ be a monic polynomial set. The following assertions are equivalent:
\begin{enumerate}
   \item $\{Q_n(x)\}_{n=0}^{\infty}$ is quasi-orthogonal and is a $q$-Appell set, $n\geq 1$.
   \item There exists three constants $\alpha$, $\beta$ and $\gamma$ ($\beta,\gamma\neq 0$) such that 
   \[ Q_n(x)=\beta^nU_n^{(\alpha/\beta)}\left(\dfrac{x}{\beta};q\right)- \dfrac{\beta^n [n]_q}{\lambda}U_{n-1}^{(\alpha/\beta)}\left(\dfrac{x}{\beta};q\right)\;\quad (n\geq 1),\]
   where $U_n^{(a)}(x;q)$ are the Al-Salam Carlitz I polynomials.  
\end{enumerate}
\end{theorem}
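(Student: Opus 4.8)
The plan is to prove the two implications separately; $(2)\Rightarrow(1)$ is a direct verification and $(1)\Rightarrow(2)$ carries the content. For $(2)\Rightarrow(1)$, abbreviate $V_n(x)=\beta^nU_n^{(\alpha/\beta)}(x/\beta;q)$. Each $V_n$ is monic of degree $n$, and combining the $q$-derivative rule $D_qU_n^{(a)}(x;q)=[n]_qU_{n-1}^{(a)}(x;q)$ with the scaling identity $D_q\big(f(x/\beta)\big)=\beta^{-1}(D_qf)(x/\beta)$ gives $D_qV_n=[n]_qV_{n-1}$. Writing the stated formula as $Q_n=V_n-\tfrac{\beta[n]_q}{\lambda}V_{n-1}$ and differentiating termwise then yields $D_qQ_n=[n]_qQ_{n-1}$, so $\{Q_n\}$ is $q$-Appell; and since $\{V_n\}$ is the orthogonal Al-Salam Carlitz family merely rescaled, the same formula has the shape $Q_n=1\cdot V_n+(-\beta[n]_q/\lambda)V_{n-1}$ with both coefficients nonzero, so the Riesz--Chihara criterion makes $\{Q_n\}$ quasi-orthogonal.

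For $(1)\Rightarrow(2)$ I would start from the related orthogonal family $\{P_n\}$ and Dickinson's connection (Proposition \ref{dickinson}), $P_n=\sum_{k=0}^nT_kQ_k$, in which Theorem \ref{theo-car1} has already identified the coefficients as $T_k=\lambda^k/[k]_q!$. The crucial move is to $q$-differentiate this relation: using $D_qQ_k=[k]_qQ_{k-1}$ and shifting the summation index produces $D_qP_n=\lambda P_{n-1}$. This is the $q$-Appell relation only up to normalization, so I would pass to $\widetilde P_n=\tfrac{[n]_q!}{\lambda^n}P_n$; a one-line computation then gives $D_q\widetilde P_n=[n]_q\widetilde P_{n-1}$, and $\{\widetilde P_n\}$ is still orthogonal because it differs from $\{P_n\}$ only by nonzero constants.

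The family $\{\widetilde P_n\}$ is now orthogonal and $q$-Appell, so al-Salam's characterization (Proposition \ref{pro1}) applies and gives $\widetilde P_n(x)=\beta^nU_n^{(\alpha/\beta)}(x/\beta;q)=V_n(x)$ for some $\alpha$ and some $\beta\neq0$; undoing the rescaling yields $P_n=\tfrac{\lambda^n}{[n]_q!}V_n$. Substituting this into the Riesz--Chihara representation $Q_n=a_nP_n+b_nP_{n-1}$ and comparing leading coefficients, both $Q_n$ and $V_n$ being monic, forces $a_n=[n]_q!/\lambda^n$, so that $Q_n=V_n+\kappa_nV_{n-1}$ for a single scalar $\kappa_n$. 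Imposing $D_qQ_n=[n]_qQ_{n-1}$ on this expression gives the recursion $[n-1]_q\kappa_n=[n]_q\kappa_{n-1}$, hence $\kappa_n=\kappa_1[n]_q$ with $\kappa_1\neq0$ (otherwise $Q_n=V_n$ would be orthogonal, not quasi-orthogonal). Setting $\lambda:=-\beta/\kappa_1$ turns $Q_n=V_n+\kappa_1[n]_qV_{n-1}$ into exactly the stated formula.

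I expect the middle step to be the main obstacle. The related orthogonal family is not $q$-Appell as it stands, satisfying only $D_qP_n=\lambda P_{n-1}$, and recognising that the canonical rescaling $\widetilde P_n=\tfrac{[n]_q!}{\lambda^n}P_n$ restores the exact normalization $D_q\widetilde P_n=[n]_q\widetilde P_{n-1}$ is precisely what unlocks Proposition \ref{pro1}. Everything else is bookkeeping: tracking the monic normalization of the Al-Salam Carlitz polynomials and reconciling the constant $\lambda$ of Theorem \ref{theo-car1} with the coefficient multiplying $V_{n-1}$, so that the three free parameters line up with $\alpha,\beta,\lambda$ in the final formula.
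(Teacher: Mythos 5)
Your proof is correct and follows essentially the same route as the paper: both directions hinge on showing that the related orthogonal family, rescaled by $[n]_q!/\lambda^n$, is orthogonal and $q$-Appell so that Proposition \ref{pro1} identifies it with the Al-Salam Carlitz family, and then expressing $Q_n$ as a two-term combination of consecutive members. The only (harmless) difference is that you pin down the coefficient of $V_{n-1}$ via the recursion $[n-1]_q\kappa_n=[n]_q\kappa_{n-1}$ forced by the $q$-Appell condition, whereas the paper obtains it by differencing consecutive instances of the inverted connection formula \eqref{rel-pol}.
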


\begin{proof}
Suppose first that $\{Q_n(x)\}_{n=0}^{\infty}$ is quasi-orthogonal and is a $q$-Appell set, $n\geq 1$. Then, by Theorem \ref{theo-car1}, the $Q_n$'s satisfy a recurrence relation of the form \eqref{rec-car1}. Let us define the polynomial set $\{P_n(x)\}_{n=0}^{\infty}$ by
\begin{equation}\label{rel-pol}
P_n(x)=\dfrac{[n]_q!}{\lambda^n}\sum_{k=0}^{n}\dfrac{\lambda^k}{[k]_q!}Q_k(x). 
\end{equation} 
It is not difficult to see that 
\[ D_q P_n(x)=\dfrac{[n]_q!}{\lambda^n}\sum_{k=1}^{n}\dfrac{\lambda^k}{[k]_q!}[k]_qQ_{k-1}(x)= [n]_q\dfrac{[n-1]_q!}{\lambda^{n-1}}\sum_{k=0}^{n-1}\dfrac{\lambda^k}{[k]_q!}Q_k(x)=[n]_q P_{n-1}(x).\]
Hence, $\{P_n(x)\}_{n=0}^{\infty}$ is a $q$-Appell set. Moreover, $\{P_n(x)\}_{n=0}^{\infty}$ is the orthogonal set (see Proposition \ref{dickinson}) related to $\{Q_n(x)\}_{n=0}^{\infty}$. By Proposition \ref{pro1}, there exist $\alpha$ and $\beta$ such that 
\[P_n(x)=\beta^nU_n^{(\alpha/\beta)}\left(\dfrac{x}{\beta};q\right).\]
Next, from \eqref{rel-pol}, it follows easily that 
\[Q_n(x)=\dfrac{[n]_q!}{\lambda^n}\left(P_n(x)-P_{n-1}(x)\right).\]
The first implication of the theorem follows. \\
Conversely, assume that  there exist three constants $\alpha$, $\beta$ and $\gamma$ ($\beta,\gamma\neq 0$) such that 
 \[ Q_n(x)=\beta^nU_n^{(\alpha/\beta)}\left(\dfrac{x}{\beta};q\right)- \dfrac{\beta^n [n]_q}{\lambda}U_{n-1}^{(\alpha/\beta)}\left(\dfrac{x}{\beta};q\right)\;\quad (n\geq 1).\]
   It is easy to see that $\{Q_n(x)\}_{n=0}^{\infty}$ is a quasi-orthogonal set. It remains to prove that $\{Q_n(x)\}_{n=0}^{\infty}$ is a $q$-Appell set. Using the fact that $D_q [f(a x)]=a [D_qf](ax)$, we have 
   \[D_q U_n^{(\alpha/\beta)}\left(\dfrac{x}{\beta};q\right)=\dfrac{1}{\beta} U_{n-1}^{(\alpha/\beta)}\left(\dfrac{x}{\beta};q\right).\] It follows that $D_q Q_n(x)=[n]_q Q_{n-1}(x)$. This ends the proof of the theorem.
\end{proof}




\end{document}